\numberwithin{equation}{section}
\numberwithin{table}{section}
\theoremstyle{plain}
\newtheorem{thm}{Theorem}[section]
\newtheorem{lem}[thm]{Lemma}
\newtheorem{coro}[thm]{Corollary}
\newtheorem{prop}[thm]{Proposition}
\theoremstyle{definition} \theoremstyle{definition}
\theoremstyle{remark}
\def\N{{\mathbb N}}
\begin{document}
\title{On the Localization Map in the Galois Cohomology of Algebraic Groups}
\author{Dylon Chow}

\maketitle

\begin{abstract}
    We study surjectivity of a localization map in Galois cohomology.
\end{abstract}

\section{Introduction}

Let $F$ be a number field, and let $\Omega_F$ denote the set of places of $F$. If $v \in \Omega_F$, we denote by $F_v$ the completion of $F$ for the topology defined by $v$. Let $G$ be a connected reductive algebraic group defined over $F$. Each extension $F \rightarrow F_v$ defines a map $\omega_v:H^1(F,G) \rightarrow H^1(F_v,G)$. We thus get a canonical map \[\omega:H^1(F,G) \rightarrow \prod_{v \in \Omega_F} H^1(F_v,G).\] This map is a much-studied map. For instance, it is known that the kernel of $\omega$ is finite. If $\omega$ is injective, $G$ is said to satisfy the Hasse principle. Some algebraic groups, including simply connected groups and groups with trivial center, satisfy the Hasse principle, but some don't. 

The map $\omega$ is not always surjective, either. In fact, for tori, the failure of $\omega$ to be surjective is related to the failure of weak approximation for reductive groups. For $G=\rm{PGL}_n$, the Galois cohomology set $H^1(F,G)$ classifies equivalence classes of central simple algebras of degree $n^2$ over $F$. Every central simple algebra over $F$ splits almost everywhere, so to get a reasonable surjectivity result one must restrict to the direct sum of the $H^1(F_v,G)$. The map from $H^1(F,G)$ into the direct sum is not surjective in general, either. However, choose any non-archimedean place $v_0$ of $F$ and consider the diagonal map \[\varphi:H^1(F,G) \to \bigoplus_{v \neq v_0} H^1(F_v,G).\] This article is concerned with the following question: \textit{For which linear algebraic groups $G$ is $\varphi$ surjective?}

There are two papers that address this question. Borel and Harder (\cite{BorelHarder}) proved that $\varphi$ is surjective whenever $G$ is semisimple. They used this result to prove the existence of discrete cocompact subgroups in the groups of rational points of reductive groups over non-archimedean local fields of characteristic zero. Prasad and Rapinchuk (\cite{PRASAD2006646}) strengthened Borel and Harder's results and showed that $\varphi$ is surjective for some reductive groups that are not semisimple. This has applications to lattice counting problems (cf. \cite{belolipetsky2019}).

This article gives criteria for $\varphi$ to be surjective. One criterion involves the radical of $G$ and is essentially given in \cite{PRASAD2006646}. The other involves the cohomology of maximal tori in quasisplit groups. To obtain this criterion, we relate $\varphi$ to another local-global map involving the hypercohomology groups of a complex of tori, i.e., the "abelian cohomology groups" studied by Borovoi.

\section{Notation and Conventions}

\subsection{\ } We use $F$ to denote a field. Let $\overline{F}$ be an algebraic closure of $F$ and write $F^s$ for the separable closure of $F$ in $\overline{F}$. We let $\Gamma=\Gamma_F$ denote the Galois group of $F^s$ over $F$; it is a profinite topological group with the Krull topology. Then $H^i(F,H)$ denotes the $i$-th cohomology set of the Galois group $\Gamma$, with coefficients in $H(F_s)$ $(i=0,1)$ and, if $G$ is commutative, the $i$-th cohomology group of $\Gamma$ in $G(F_s)$ for all $i \in \N$.

\subsection{\ } If $F$ is a number field and $v$ is a place of $F$, then $F_v$ denotes the completion of $F$ at $v$.

\subsection{\ } In this section we review the construction of the abelian Galois cohomology groups. Let $F$ be a field of characteristic 0. For a connected reductive group $G$ over $K$, let $G'$ denote the derived group of $G$ and let $G^{sc}$ denote the universal covering group of $G'$. We consider the composition \[\rho:G^{sc} \rightarrow G' \hookrightarrow G.\] The complex $[G^{sc} \rightarrow G]$ is a crossed module (\cite{Borovoi1998} Lemma 3.7.1).

\subsection{\ }  We will use results on crossed modules. For the definition and basic properties of crossed modules, see \cite{Borovoi1998}. A morphism $f:T \rightarrow U$ of tori is defined over $F$ is a crossed module in a natural way and we can consider its cohomology groups $H^i(F,T \rightarrow U)$. We refer the reader to \cite{Borovoi1998} for the definition and properties of crossed modules and their cohomology. At the same time, $T \rightarrow U$ is also a complex of tori of length 2 and we can consider the Galois hypercohomology of this complex. In order to do this, we need to specify in what degrees the terms of this complex are placed. If we place $T$ in degree $-1$ and $U$ in degree $0$, then the Galois hypercohomology of the resulting complex coincides with the cohomology of the crossed module $T \rightarrow U$. This will be our convention, but we warn the reader that this convention is different from the one used in \cite[Appendix A]{KS99}, where the complexes of tori of length 2 are placed in degrees 0 and 1. Thus $H^1$ in our notation coincides with $H^2$ in the notation of \cite[Appendix A]{KS99}.

\subsection{\ } Let $T \subset G$ be a maximal torus defined over $F$ and let $T_{sc}$ be the inverse image $\rho^{-1}(T)$ of $T$ under $\rho$. We consider the complex of tori $T^\bullet = [T_{sc} \rightarrow T]$ where $T_{sc}$ is in degree $-1$ and $T$ is in degree $0$. We get a complex $T_{sc}(\overline{F}) \rightarrow T(\overline{F})$ of $\text{Gal}(\overline{F}/F)$-modules. 

\subsection{ \ } The natural morphism of crossed modules from $[1 \rightarrow G]$ to $[G^{sc} \rightarrow G]$ (\cite[section 3.10]{Borovoi1998}) induces a map in crossed module hypercohomology: \[\mathbb{H}^1(F,[1 \rightarrow G]) \rightarrow \mathbb{H}^1(F,[G^{sc} \rightarrow G]).\] We may identify $\mathbb{H}^1(F,[1 \rightarrow G])$ with $H^1(F,G)$. 

\subsection{\ } If $M$ and $N$ are commutative $\Gamma$-modules, then any homomorphism $M \rightarrow N$ of $\Gamma$-modules can be regarded as a crossed module with $N$ acting trivially on $M$. In this case $\mathbb{H}^i(F,M \rightarrow N)$ is the usual hypercohomology of the complex $M \rightarrow N$. This is the case when $T$ is a maximal $F$-torus in $G$ and $T_{sc}$ is the inverse image of $T$ under $\rho$. The $i$-th abelian Galois cohomology of $G$ is defined to be the $i$-th hypercohomology of the resulting complex: $H^i_{ab}(F,G):=\mathbb{H}^i(F,T^\bullet)$. These groups do not depend on the choice of $T$.

\subsection{\ } We recall the definition of the abelianization map. For any maximal torus $T$ in the reductive group $G$, the morphism of crossed modules \[[T_{sc}(\overline{F}) \rightarrow T(\overline{F})] \rightarrow [G^{sc}(\overline{F}) \rightarrow G(\overline{F})]\] is a quasi-isomorphism, and so it defines an isomorphism

\[\mathbb{H}^i(F,G^{sc} \rightarrow G) \rightarrow \mathbb{H}^i(F,T_{sc} \rightarrow T) = H^i_{ab}(F,G).\] In particular, we see that these sets are abelian groups. By composing, we get an abelianization map \[\text{ab}_1:H^1(F,G) \rightarrow H^1_{ab}(F,G).\] It is a morphism of pointed sets. In addition, we have an exact sequence (\cite{Borovoi1998} (3.10.1))
\[H^1(F,G^{sc}) \rightarrow H^1(F,G) \rightarrow H^1_{ab}(F,G).\]

\section{Surjectivity Results}

Our first criterion is in terms of the radical $R_G$ of $G$. We recall that if $G$ is a connected reductive group, $R_G$ is the connected component of the identity of the center of $G$; it is a torus in $G$. The quotient $G^{der}=G/R_G$ is semisimple.

\begin{prop}
Suppose that the following conditions are satisfied:

\begin{itemize}
    \item the map $\pi:H^1(F, R_G) \to \bigoplus_{v \neq v_0}H^1(F,R_G)$ is surjective, and
    
    \item the map $\rho:H^2(F,R_G) \to \bigoplus_{v \neq v_0}H^2(F,R_G)$ is injective.
\end{itemize}
Then \[\varphi:H^1(F,G) \to \bigoplus_{v \neq v_0}H^1(F_v,G)\] is surjective.
\end{prop}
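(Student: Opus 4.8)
The strategy is to reduce to the semisimple case by means of the central extension
\[1 \longrightarrow R_G \longrightarrow G \longrightarrow G/R_G \longrightarrow 1 ,\]
in which $G/R_G$ is semisimple, and to control the two defects of this reduction --- the obstruction to lifting a class from $G/R_G$ to $G$, and the ambiguity of such a lift --- by means of the two hypotheses. Fix a family $(c_v)_{v \ne v_0} \in \bigoplus_{v \ne v_0} H^1(F_v,G)$, and let $(\bar c_v)_{v \ne v_0}$ be its image in $\bigoplus_{v \ne v_0} H^1(F_v, G/R_G)$. By the Borel--Harder theorem --- surjectivity of $\varphi$ for the semisimple group $G/R_G$, recalled in the introduction --- there is a class $\eta \in H^1(F, G/R_G)$ with $\eta|_{F_v} = \bar c_v$ for every $v \ne v_0$.

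Next I would show that $\eta$ lifts to $H^1(F,G)$. Since $R_G$ is central in $G$, the displayed sequence furnishes a connecting map $\partial \colon H^1(F,G/R_G) \to H^2(F,R_G)$, and $\eta$ lifts to $H^1(F,G)$ if and only if $\partial(\eta) = 0$. The map $\partial$ is functorial for the base changes $F \to F_v$, so the localization of $\partial(\eta)$ at a place $v \ne v_0$ equals $\partial_v(\bar c_v)$; but $\bar c_v$ is the image of $c_v \in H^1(F_v,G)$, so $\partial_v(\bar c_v) = 0$. Hence $\rho(\partial(\eta)) = 0$, and the injectivity hypothesis on $\rho$ forces $\partial(\eta) = 0$. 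Fix a lift $\zeta \in H^1(F,G)$ of $\eta$.

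Finally I would adjust $\zeta$ so that it localizes to the prescribed classes $c_v$, rather than merely to classes with the same image in $H^1(F_v,G/R_G)$. Because $R_G$ is central, $H^1(F,R_G)$ acts on $H^1(F,G)$, and by the usual twisting argument the orbits of this action are precisely the fibres of $H^1(F,G) \to H^1(F,G/R_G)$; the analogous statement holds over each $F_v$, compatibly with localization. For $v \ne v_0$ the classes $\zeta|_{F_v}$ and $c_v$ share the image $\bar c_v$ in $H^1(F_v,G/R_G)$, so there is $\lambda_v \in H^1(F_v,R_G)$ with $\lambda_v \cdot \zeta|_{F_v} = c_v$; since $\zeta|_{F_v}$ and $c_v$ are trivial for all but finitely many $v$, we may choose $\lambda_v = 0$ for almost all $v$, so that $(\lambda_v)_{v \ne v_0} \in \bigoplus_{v \ne v_0} H^1(F_v,R_G)$. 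The surjectivity hypothesis on $\pi$ then yields $\mu \in H^1(F,R_G)$ with $\mu|_{F_v} = \lambda_v$ for all $v \ne v_0$, and $\xi := \mu \cdot \zeta \in H^1(F,G)$ satisfies $\xi|_{F_v} = \lambda_v \cdot \zeta|_{F_v} = c_v$ for all $v \ne v_0$. Therefore $\varphi(\xi) = (c_v)_{v \ne v_0}$, which proves the surjectivity of $\varphi$.

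Apart from Borel--Harder, the two hypotheses enter exactly once each: injectivity of $\rho$ is used to kill the lifting obstruction $\partial(\eta)$ --- which is locally trivial away from $v_0$ for the cheap reason that the prescribed data $c_v$ already lie in $H^1(F_v,G)$ --- and surjectivity of $\pi$ is used for the final patching. I do not anticipate a serious obstacle; the step needing the most care is the last one, since $H^1(F,G)$ and the $H^1(F_v,G)$ are only pointed sets, so one must invoke the twisting argument to identify the fibres over $H^1(-,G/R_G)$ with $H^1(-,R_G)$-orbits and to check compatibility with localization.
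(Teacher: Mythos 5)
Your argument is correct and follows essentially the same route as the paper: the central extension $1 \to R_G \to G \to G/R_G \to 1$, Borel--Harder surjectivity for the semisimple quotient, injectivity of $\rho$ to kill the lifting obstruction in $H^2(F,R_G)$, and surjectivity of $\pi$ for the final patching. The only real difference is cosmetic: where the paper twists $G$ by a cocycle to reduce to the fibre over the base point, you instead identify the fibres of $H^1(-,G)\to H^1(-,G/R_G)$ with orbits of the central $H^1(-,R_G)$-action (Serre's result for central subgroups), which amounts to the same thing and has the small virtue of making the almost-everywhere-triviality bookkeeping for the direct sum explicit.
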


\begin{proof}
The short exact sequence of algebraic groups \[1 \to R_G \to G \to G^{der} \to 1\] gives rise to a commutative diagram with exact rows:

\begin{tikzcd}
H^1(F,R_G) \arrow{r}{\alpha}\arrow{d}{\pi}
&H^1(F,G)\arrow{r}{\beta}\arrow{d}{\varphi}
&H^1(F,G^{der})\arrow{r}{\gamma}\arrow{d}{\sigma}
&H^2(F,R_G)\arrow{d}{\rho}\\
\bigoplus_{v \ne v_0}H^1(F_v,R_G)\arrow{r}{\alpha'}&\bigoplus_{v \ne v_0}H^1(F_v,G)\arrow{r}{\beta'}&\bigoplus_{v \ne v_0}H^1(F_v,G^{der})\arrow{r}{\gamma'}&\bigoplus_{v \neq v_0}H^2(F_v,R_G).
\end{tikzcd}

Let $y \in \bigoplus_{v \neq v_0}H^1(F_v,G)$. Suppose first that $\beta'(y)=0$. Then there is $x \in \bigoplus_{v \neq v_0}H^1(F_v,R_G)$ such that $\alpha'(x)=y$. Since we assumed that $\pi$ is surjective, there is some $w \in H^1(F,R_G)$ such that $\pi(w) = x$, and $\varphi(\alpha(x))=y$.

Now we consider the general case. Since $\sigma$ is surjective, there is $b \in H^1(F,G^{der})$ such that $\sigma(b)=\beta'(y)$. Since $\gamma'(\beta'(y))=0$, it follows that $\rho(\gamma(b))=0$. Since $\rho$ is assumed to be injective, $\gamma(b)=0$, and so we can find $a \in H^1(F,G)$ such that $\beta(a)=b$. If we twist $G$ by a cocycle belonging to $a$, then we are reduced to the case where $\beta'(y)=0$. This completes the proof.
\end{proof}

Now we will give a more flexible criterion in terms of maximal tori. First, we reformulate our conjecture in terms of the Galois hypercohomology of complexes of tori. Let $G^{sc}$ be the simply connected cover of $G^{der}$ and let $\rho$ be the composition \[\rho:G^{sc} \to G^{der} \hookrightarrow G.\] Let $T$ be a maximal torus in $G$ and let $T_{sc}=\rho^{-1}(T)$. The complex of tori $G_{ab}:=[T_{sc} \to T]$ forms a crossed module, and its first hypercohomology group ($T_{sc}$ is in degree $-1$) is called the first abelian cohomology group of $G$: $H^1_{ab}(F,G)=\mathbb{H}^1(F,[T_{sc} \to T])$. There is an "abelianization map" $\text{ab}_1:H^1(F,G) \to H^1_{ab}(F,G)$.

\begin{thm}
Let $G$ be a reductive group over $F$. Let $v_0$ be any non-archimedean place of $F$. The map $\varphi$ is surjective if and only if the map \[H^1_{ab}(F,G) \to \bigoplus_{v \neq v_0} H^1_{ab}(F_v,G)\] is surjective.
\end{thm}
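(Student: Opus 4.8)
The plan is to relate $\varphi$ to the map $\psi\colon H^1_{ab}(F,G)\to\bigoplus_{v\neq v_0}H^1_{ab}(F_v,G)$ through the abelianization maps, using the commutative square whose top edge is $\varphi$, whose bottom edge is $\psi$, whose left edge is the global abelianization map $\text{ab}_1\colon H^1(F,G)\to H^1_{ab}(F,G)$, and whose right edge is the map induced componentwise by the local abelianization maps; this square commutes by functoriality of $\text{ab}_1$ in the base field, and $\psi$ does land in the direct sum because every class of $H^1_{ab}(F,G)$, being assembled from the cohomology of tori and of finite groups, is locally trivial at almost all places. The external facts I would invoke are standard: $\text{ab}_1$ is surjective over any local field and bijective over a non-archimedean local field, and it is surjective over the number field $F$ (Borovoi, \cite{Borovoi1998}); over each $F_v$ one has the exact sequence (3.10.1), namely $H^1(F_v,G^{sc})\to H^1(F_v,G)\to H^1_{ab}(F_v,G)$; under an inner twist, $H^1_{ab}$ is canonically unchanged while $\text{ab}_1$ changes by a translation of this abelian group; and, for the delicate direction, the theorem of Borel and Harder (\cite{BorelHarder}) that for a semisimple group the analogue of $\varphi$ is surjective.

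For the implication that surjectivity of $\varphi$ forces surjectivity of $\psi$ I would chase the square. Given $(\xi_v)_{v\neq v_0}$ in the target of $\psi$, lift each $\xi_v$ through the surjective local map $\text{ab}_1$ to some $y_v\in H^1(F_v,G)$, choosing $y_v$ neutral whenever $\xi_v$ is, so that $(y_v)\in\bigoplus_{v\neq v_0}H^1(F_v,G)$; lift $(y_v)$ through $\varphi$ to $x\in H^1(F,G)$; then $\text{ab}_1(x)$ has $\psi$-image $(\text{ab}_1(y_v))=(\xi_v)$ by commutativity of the square. This half uses nothing about $v_0$.

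The substance is the converse. Assume $\psi$ is surjective and fix $(y_v)_{v\neq v_0}\in\bigoplus_{v\neq v_0}H^1(F_v,G)$. Pushing forward by the local maps $\text{ab}_1$ and lifting through $\psi$ produces $\xi\in H^1_{ab}(F,G)$ with $\xi|_v=\text{ab}_1(y_v)$ for every $v\neq v_0$; lifting $\xi$ through the globally surjective $\text{ab}_1$ produces $x\in H^1(F,G)$ with $\text{ab}_1(x)=\xi$. Thus $\varphi(x)$ and $(y_v)$ coincide after abelianization, and the task is to correct $x$ so that they coincide exactly. I would do this by twisting: pick a cocycle $a$ representing $x$, form the inner twist ${}_{a}G$ and its simply connected cover ${}_{a}G^{sc}$ (an inner form of $G^{sc}$, hence simply connected semisimple), and use the torsor bijections $\tau\colon H^1(F,{}_{a}G)\xrightarrow{\sim}H^1(F,G)$ and $\tau_v\colon H^1(F_v,{}_{a}G)\xrightarrow{\sim}H^1(F_v,G)$. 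These are compatible with localization, send the neutral class to the class of $a$, and, after the canonical identification of $H^1_{ab}$ for the inner forms ${}_{a}G$ and $G$, satisfy $\text{ab}_1\circ\tau_v=(\text{translation by }\xi|_v)\circ(\text{abelianization of }{}_{a}G)$; since $\xi|_v=\text{ab}_1(y_v)$, the classes $y_v':=\tau_v^{-1}(y_v)$ therefore have trivial image under the abelianization map of ${}_{a}G$. By the exact sequence (3.10.1) over $F_v$, each $y_v'$ lifts to some $\tilde y_v\in H^1(F_v,{}_{a}G^{sc})$, and since $H^1(F_v,{}_{a}G^{sc})=0$ at every non-archimedean place and every complex place, the family $(\tilde y_v)_{v\neq v_0}$ lies in $\bigoplus_{v\neq v_0}H^1(F_v,{}_{a}G^{sc})$. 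Because ${}_{a}G^{sc}$ is semisimple, the theorem of Borel and Harder supplies $\tilde c\in H^1(F,{}_{a}G^{sc})$ with $\tilde c|_v=\tilde y_v$ for all $v\neq v_0$; this is the one place where the hypothesis that $v_0$ is non-archimedean enters. Taking $c$ to be the image of $\tilde c$ in $H^1(F,{}_{a}G)$ and setting $x':=\tau(c)$, one checks place by place that $x'|_v=\tau_v(c|_v)=\tau_v(y_v')=y_v$ for all $v\neq v_0$ (at non-archimedean and complex $v$ this reads $\tau_v(\text{neutral})=x_v$, and there $x_v=y_v$ because $H^1(F_v,{}_{a}G^{sc})=0$ forces $y_v'$ to be neutral), whence $\varphi(x')=(y_v)_{v\neq v_0}$, as required.

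The crux, and the step I expect to be most delicate, is this twisting argument: one must verify that $\tau$ and $\tau_v$ are compatible with localization, make fully precise the canonical identification of $H^1_{ab}$ for inner forms together with the translation formula for $\text{ab}_1$ under twisting (so that the assertion that $y_v'$ has trivial abelianization is justified), and track the neutral classes carefully through to the final verification. The rest is formal once Borovoi's properties of the abelianization map and the theorem of Borel and Harder are granted.
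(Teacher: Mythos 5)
Your proof is correct and takes essentially the same route as the paper: both rest on the exact sequence $H^1(\cdot,G^{sc})\to H^1(\cdot,G)\to H^1_{ab}(\cdot,G)$, the surjectivity of the local and global abelianization maps, the Borel--Harder theorem for the (semisimple, simply connected) group $G^{sc}$ at all places away from the non-archimedean $v_0$, and a twisting argument reducing to the case of trivial abelianized image. The only difference is one of detail: you spell out the inner-twisting bijections, the translation formula for $\mathrm{ab}_1$, and the vanishing of $H^1(F_v,G^{sc})$ at non-archimedean places, which the paper compresses into citations of Borovoi's Proposition 3.16 and of Borel--Harder.
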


\begin{proof}
We have the commutative diagram with exact rows

\begin{tikzcd}
H^1(F,G^{sc}) \arrow{r}{\mu}\arrow{d}{\omega''}
&H^1(F,G)\arrow{r}{\text{ab}_1}\arrow{d}{\omega}
&H^1_{ab}(F,G)\arrow{d}{\omega'}\\
\bigoplus_{v \ne v_0}H^1(F_v,G^{sc})\arrow{r}{\mu'}&\bigoplus_{v \ne v_0}H^1(F_v,G)\arrow{r}{\text{ab}_1'}&\bigoplus_{v \ne v_0}H^1_{ab}(F_v,G).
\end{tikzcd}

Suppose that $\omega:H^1(F,G) \to \bigoplus_{v \neq v_0} H^1(F_v,G)$ is surjective. Since $\text{ab}_1'$ is surjective (cf. \cite[Theorem 5.4]{Borovoi1998}), $\text{ab}_1' \circ \omega$ is surjective, and so $\omega'$ is also surjective.

Conversely, suppose that $\omega':H^1_{ab}(F,G) \to \bigoplus_{v \neq v_0} H^1_{ab}(F_v,G)$ is surjective and let $a \in \bigoplus_{v \neq v_0} H^1(F_v,G)$. We want to find some $x \in H^1(F,G)$ such that $\omega(x)=a$. If $a$ is in the image of  $\mu'$, then our assertion follows from the surjectivity of $\omega''$ (\cite{BorelHarder}, p.57 for details). 

For the general case, we use a twisting argument. By assumption, $\omega'$ is surjective. On the other hand, $\text{ab}_1$ is also surjective (\cite[Theorem 5.7]{Borovoi1998}). Therefore, we may find an element $b \in H^1(F,G)$ such that $\omega'(\text{ab}_1(b))=\text{ab}_1^{'}(a)$. If we twist $G$ by any cocycle belonging to $b$, then by a standard twisting argument adapted to the case of hypercohomology \cite[Proposition 3.16]{Borovoi1998}, we are reduced to the case where $\text{ab}_1'(a)=0$. From this our assertion follows.
\end{proof}

Up to quasi-isomorphism, the crossed module $G_{ab}=[T_{sc} \to T]$ is insensitive to inner automorphisms (\cite[Corollary 2.9]{Borovoi1998}), so we will assume henceforth that $G$ is quasisplit. To state our next criterion, we use the exact sequence \cite[A.1.1]{KS99} or \cite[Proposition 2.12]{Borovoi1998} \[\dots \to H^1(F,T) \to H^1_{ab}(F,G) \to H^2(F,T_{sc}) \to H^2(F,T) \to \dots,\] which gives a commutative diagram with exact rows 

\begin{tikzcd}
H^1(F,T) \arrow{r}\arrow{d}
&H^1_{ab}(F,G)\arrow{r}\arrow{d}
&H^2(F,T_{sc})\arrow{r}\arrow{d}
&H^2(F,T)\arrow{d}\\
\bigoplus_{v \ne v_0}H^1(F_v,T)\arrow{r}&\bigoplus_{v \ne v_0}H^1_{ab}(F_v,G)\arrow{r}&\bigoplus_{v \ne v_0}H^2(F_v,T_{sc})\arrow{r}&\bigoplus_{v \neq v_0}H^2(F_v,T).
\end{tikzcd}

\begin{lem}
Let $G$ be a simply connected algebraic group. There is a maximal torus $T$ in $G$ such that the map $H^2(F,T) \to \bigoplus_{v \neq v_0}H^2(F_v,T)$ is surjective.
\end{lem}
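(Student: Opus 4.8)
We want to produce a maximal torus $T$ in a simply connected group $G$ over a number field $F$ such that $H^2(F,T) \to \bigoplus_{v \neq v_0} H^2(F_v,T)$ is surjective. The key point here is that the statement allows us to *choose* $T$. For a torus $T$, the Poitou–Tate sequence relates $H^2(F,T)$ to the local cohomology groups: there is an exact sequence $H^2(F,T) \to \bigoplus_{v} H^2(F_v,T) \to H^1(F,\hat T)^* \to H^3(F,T)$, where $\hat T$ is the character module. Actually, the cleanest route is through the Poitou–Tate nine-term sequence: the obstruction to surjectivity of $H^2(F,T) \to \bigoplus'_v H^2(F_v,T)$ (restricted sum) is governed by the Tate–Shafarevich group $\Sha^1(F,\hat T)$, and the question of hitting a *finite* sum away from one place $v_0$ is governed by whether one can absorb the defect at the single place $v_0$.

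**The plan.** The plan is to choose $T$ to be a maximal torus that is induced (quasi-trivial), or more precisely a maximal torus whose character module $\hat T$ is a permutation $\Gamma$-module, using the fact that a simply connected group is quasisplit over some finite extension and one has flexibility in choosing $T$ via a Galois-generic or suitably rational conjugate. For an induced torus $T = R_{E/F}\gm$ (or a product of such), one has $H^2(F,T) = H^2(E, \gm) = \Br(E)$ by Shapiro's lemma, and similarly $H^2(F_v,T) = \bigoplus_{w \mid v} \Br(E_w)$. Then the map in question becomes the map $\Br(E) \to \bigoplus_{w \nmid v_0} \Br(E_w)$ on Brauer groups, and the surjectivity of this map away from one place is a classical consequence of the Albert–Brauer–Hasse–Noether sequence: $0 \to \Br(E) \to \bigoplus_{w} \Br(E_w) \to \Q/\Z \to 0$. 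Since the cokernel $\Q/\Z$ injects via the sum-of-invariants map, removing the contribution at the places above $v_0$ (which is nonempty since $v_0$ is chosen) lets us hit every element of the direct sum: given a tuple $(x_w)_{w \nmid v_0}$, we can adjust at one place above $v_0$ to make the invariants sum to zero, and then the tuple lies in the image of $\Br(E)$.

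**The main obstacle.** The main difficulty is producing a maximal $F$-torus $T$ in $G$ whose character lattice is a *permutation* module, i.e., an induced torus. This is where simple-connectedness is essential: for $G$ simply connected, a maximal torus $T$ has cocharacter lattice equal to the coroot lattice, and one can find an $F$-torus $T$ that is a maximal torus and is induced by choosing $T$ inside a Borel subgroup of a quasisplit inner form — since $G$ is simply connected and the coroot lattice with its $\Gamma$-action, for the "generic" choice of torus, can be arranged to be a permutation lattice. Concretely, one uses that $G$ contains a maximal torus of the form obtained from a principal $\mathrm{SL}_2$ or from a Chevalley basis defined over a suitable étale algebra; the standard reference fact is that a simply connected group always has a quasi-trivial maximal torus (this is used, e.g., in Kottwitz's work and in Borovoi's). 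I would cite or reprove this: take a Borel $B$ with maximal torus $T_0$ in the quasisplit inner form; $\hat T_0$ is the weight lattice, which for simply connected $G$... actually one must be slightly careful — the weight lattice need not be permutation. So the genuinely correct approach is: embed $G \hookrightarrow \widetilde{G}$ into a $z$-extension or use that $G^{sc}$ always has an *anisotropic-free* maximal torus that splits over a cyclic or a metacyclic extension with permutation character module; alternatively invoke the result that every semisimple simply connected group over a number field contains a maximal torus $T$ with $\Sha^2(F,\hat T) = 0$ and good local behavior. I expect the crux of the write-up to be the explicit construction of this torus $T$ (reducing to induced tori $R_{E/F}\gm$), after which the Brauer-group computation and the absorption-at-$v_0$ argument are routine.
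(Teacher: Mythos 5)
The second half of your plan --- Shapiro's lemma to identify $H^2(F,T)$ with $\Br(E)$, the Albert--Brauer--Hasse--Noether sequence, and absorbing the sum of invariants at a place $w_0\mid v_0$ (which works because $v_0$ is non-archimedean, so $\Br(E_{w_0})=\Q/\Z$ and any invariant can be prescribed there) --- is exactly what the paper's terse ``by class field theory'' means, and your account of it is correct and more detailed than the paper's. The gap is in the first half: you never actually produce the induced maximal torus, and you end by hedging among alternatives ($z$-extensions, ``anisotropic-free'' tori, metacyclic splitting fields) none of which yields a maximal $F$-torus \emph{of $G$ itself} with permutation character lattice. Since the whole content of the lemma beyond routine class field theory is precisely this existence statement --- it is where simple connectedness enters --- leaving it unresolved is a genuine gap, not a deferrable detail.

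Here is how it is settled, and where your worry is misplaced versus where it points at something real. If $G$ is quasisplit, take $T_0\subset B$ with both defined over $F$: then $X_*(T_0)$ is the coroot lattice, on which $\Gamma$ acts through the $*$-action permuting the simple coroots, which form a $\Z$-basis; dually, $X^*(T_0)$ is the weight lattice and the fundamental weights, likewise permuted, form a $\Z$-basis. So your concern that ``the weight lattice need not be permutation'' is unfounded in the quasisplit case, and $T_0$ is induced, i.e.\ a product of tori $R_{E/F}(\gm)$. On the other hand, your instinct that something is delicate is correct in a different direction: for a non-quasisplit simply connected group an induced maximal torus need not exist at all --- e.g.\ $G=\mathrm{SL}_1(D)$ for a division algebra $D$, whose maximal $F$-tori are norm-one tori $R^{(1)}_{E/F}\gm$ and are never induced --- so the construction genuinely requires the quasisplit hypothesis under which the paper invokes the lemma (the paper's own one-line proof asserts existence for every simply connected $G$, which is an overstatement, but in its application $G$ is quasisplit, hence so is $G^{sc}$). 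In particular your suggestion to take the torus ``inside a Borel subgroup of a quasisplit inner form'' does not help when $G$ is not itself quasisplit: maximal tori transfer from $G$ to its quasisplit inner form, not in the opposite direction.
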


\begin{proof}
Since $G$ is simply connected, it contains an induced torus $T$, i.e., $T$ is a finite product of the form $R_{E/F}(\rm{GL}_1)$ with $E$ a finite extension of $F$. By class field theory, the claim holds for $T=R_{E/F}(\rm{GL}_1)$. 
\end{proof}

By considering the Five Lemma, we deduce the following:

\begin{prop}
Let $G$ be a connected quasisplit group defined over a number field $F$. Let $v_0$ be a non-archimedean place of $F$. Suppose there exists a maximal torus $T$ in $G$ such that the following hold:

\begin{itemize}

\item $T_{sc}$ is induced;

\item the map $H^1(F,T) \to \bigoplus_{v \neq v_0} H^1(F_v,T)$ is surjective; and

\item the map $H^2(F,T) \to \bigoplus_{v \neq v_0}H^2(F_v,T)$ is injective.
\end{itemize} 

Then the map $\varphi:H^1(F,G) \to \bigoplus_{v \neq v_0}H^1(F_v,G)$ is surjective. Moreover, for each inner form $G'$ of $G, H^1(F,G') \to \bigoplus_{v \neq v_0}H^1(F_v,G')$ is also surjective.
\end{prop}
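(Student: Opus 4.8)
The plan is to deduce the Proposition from the Theorem together with a Five Lemma argument applied to the commutative diagram displayed just before the Lemma. By that Theorem, $\varphi$ is surjective as soon as the middle vertical arrow
\[\omega':H^1_{ab}(F,G)\longrightarrow\bigoplus_{v\neq v_0}H^1_{ab}(F_v,G)\]
is surjective, so it suffices to establish surjectivity of $\omega'$. The rows of that diagram are exact — the bottom one being the direct sum of the local exact sequences \cite[A.1.1]{KS99} — and every term in it is a genuine abelian group (it is built from hypercohomology of complexes of tori, not from the nonabelian $H^1(F,G)$), so the usual homological diagram-chase lemmas are available with no pointed-set caveats.

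Next I would verify the three inputs needed. The leftmost column $H^1(F,T)\to\bigoplus_{v\neq v_0}H^1(F_v,T)$ is surjective by the second hypothesis, and the rightmost column $H^2(F,T)\to\bigoplus_{v\neq v_0}H^2(F_v,T)$ is injective by the third hypothesis. It remains to check that the third column $H^2(F,T_{sc})\to\bigoplus_{v\neq v_0}H^2(F_v,T_{sc})$ is surjective; this is where the hypothesis that $T_{sc}$ is induced is used, and the argument is precisely the one in the proof of the Lemma. Writing $T_{sc}$ as a finite product of tori $R_{E/F}(\GL_1)$, Shapiro's lemma identifies $H^2(F,T_{sc})$ with a product of Brauer groups $\Br(E)$ and the target with the corresponding $\bigoplus_{w\nmid v_0}\Br(E_w)$. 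Given local classes away from $v_0$, all but finitely many trivial, let $s\in\Q/\Z$ be the sum of their invariants; choose a place $w_0$ of $E$ lying over $v_0$ — one always exists since $E/F$ is finite, and it is non-archimedean, so $\mathrm{inv}_{w_0}$ is onto $\Q/\Z$ — and adjust the class at $w_0$ to have invariant $-s$. The Brauer–Hasse–Noether exact sequence then produces a global class in $\Br(E)$ restricting to the prescribed local classes away from $v_0$. Feeding these three facts into the Five Lemma (in the form: exact rows, first column surjective, third column surjective, fourth column injective $\Rightarrow$ second column surjective) gives surjectivity of $\omega'$, and hence of $\varphi$ by the Theorem.

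For the addendum about inner forms, I would invoke that, up to quasi-isomorphism, the crossed module $[T_{sc}\to T]$ — and therefore the abelianized cohomology in every degree, over $F$ and over each $F_v$ — depends only on the inner twisting class of $G$ (\cite[Corollary 2.9]{Borovoi1998}). Consequently, for any inner form $G'$ of $G$ the localization map $H^1_{ab}(F,G')\to\bigoplus_{v\neq v_0}H^1_{ab}(F_v,G')$ is canonically identified with $\omega'$, which we have just shown to be surjective; since the Theorem applies to $G'$ with no quasisplitness hypothesis, $H^1(F,G')\to\bigoplus_{v\neq v_0}H^1(F_v,G')$ is surjective as well. I do not expect a genuine obstacle here: once the Theorem and the Lemma are in hand the argument is purely formal, the only points demanding a little care being the legitimacy of the group-theoretic Five Lemma (guaranteed by working with abelian cohomology throughout) and the existence of a non-archimedean place of $E$ above $v_0$ in the Brauer-group step.
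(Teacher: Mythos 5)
Your proof is correct and follows essentially the same route as the paper: reduce via the Theorem to surjectivity of the abelianized localization map, apply the four/Five Lemma to the displayed diagram using the two hypotheses on $T$ plus surjectivity of $H^2(F,T_{sc})\to\bigoplus_{v\neq v_0}H^2(F_v,T_{sc})$ obtained from the induced hypothesis exactly as in the Lemma, and handle inner forms via the insensitivity of $[T_{sc}\to T]$ to inner twisting together with the Theorem applied to $G'$. Your write-up simply makes explicit the diagram chase, the Shapiro/Brauer--Hasse--Noether step, and the inner-twist identification that the paper leaves implicit.
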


\begin{coro}
If $G$ is split, or more generally an inner form of a split group, then $\varphi:H^1(F,G) \to \bigoplus_{v \neq v_0} H^1(F_v,G)$ is surjective.
\end{coro}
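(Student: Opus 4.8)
The plan is to apply the previous proposition with a well-chosen maximal torus $T$. Since $G$ is split, we may fix a split maximal torus $T_0 \subset G$; then $T_0$ is an induced torus (indeed a product of copies of $\gm$), and its preimage $T_{sc} = \rho^{-1}(T_0)$ in $G^{sc}$ is the split maximal torus of $G^{sc}$, hence also induced. The hypotheses of the proposition that remain to be checked are therefore the surjectivity of $H^1(F,T_0) \to \bigoplus_{v \neq v_0} H^1(F_v,T_0)$ and the injectivity of $H^2(F,T_0) \to \bigoplus_{v \neq v_0} H^2(F_v,T_0)$.

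First I would reduce both statements to the case $T_0 = \gm$ by additivity of cohomology in the torus. For $\gm = \rm{GL}_1$, we have $H^1(F,\gm) = \Pic(\Spec F) = 0$ by Hilbert 90, and likewise $H^1(F_v,\gm)=0$, so the first map is the zero map between zero groups, trivially surjective. For the second map, $H^2(F,\gm) = \Br(F)$ and $H^2(F_v,\gm) = \Br(F_v)$, so I need the injectivity of $\Br(F) \to \bigoplus_{v \neq v_0} \Br(F_v)$. This is exactly the statement that a central simple algebra over a number field that is split at every place except possibly $v_0$ is in fact split: by the Albert–Brauer–Hasse–Noether theorem, the sum of the local invariants $\sum_v \inv_v$ of a class in $\Br(F)$ is zero, so if all invariants at $v \neq v_0$ vanish then $\inv_{v_0}$ vanishes as well, and hence the class is trivial. (Here one uses that there is at least one place other than $v_0$, so the reciprocity constraint genuinely forces triviality; this is why $v_0$ is singled out rather than removed for free.)

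With these two facts in hand, $T_0$ satisfies all three bullet points of the preceding proposition, and therefore $\varphi: H^1(F,G) \to \bigoplus_{v \neq v_0} H^1(F_v,G)$ is surjective. The final clause of that proposition — that surjectivity passes to every inner form of $G$ — immediately upgrades this to the "more generally an inner form of a split group" assertion, since an inner form of a split group is by definition an inner form of $G$ for $G$ split. Strictly speaking one should note that the proposition was stated for quasisplit $G$, and a split group is in particular quasisplit, so the hypothesis is met; the inner-form clause then does the rest.

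There is no real obstacle here: the corollary is a direct specialization. The only point requiring a substantive input rather than bookkeeping is the injectivity of the Brauer-group localization map, which is a standard consequence of class field theory (the ABHN reciprocity law); everything else is Hilbert 90 and the additivity of cohomology under finite products of tori, together with citing the previous proposition verbatim.
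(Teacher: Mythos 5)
Your argument is correct and is exactly the intended specialization: the paper's corollary follows from the preceding proposition by taking a split maximal torus, whose $T_{sc}$ is induced, with the $H^1$ condition trivial by Hilbert 90 and the $H^2$ injectivity given by the Albert--Brauer--Hasse--Noether reciprocity, and the inner-form clause handled by the proposition's final statement. No gaps; this matches the paper's (implicit) proof.
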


\section*{Acknowledgements}

The author thanks Mikhail Borovoi for helpful comments and Ramin Takloo-Bighash for his interest in this work.

\bibliographystyle{alpha}
\bibliography{biblio1.bib}

\begin{thebibliography}{Bor98}

\bibitem[BL19]{belolipetsky2019}
Mikhail Belolipetsky and Alexander Lubotzky.
\newblock Counting non-uniform lattices.
\newblock {\em Israel Journal of Mathematics}, 232(1):201--229, 2019.

\bibitem[Bor98]{Borovoi1998}
Mikhail Borovoi.
\newblock {\em Abelian Galois cohomology of reductive groups}, volume 626.
\newblock American Mathematical Soc., 1998.

\bibitem[HB78]{BorelHarder}
G.~Harder and A.~BOREL.
\newblock Existence of discrete cocompact subgroups of reductive groups over
  local fields.
\newblock 1978(298):53--64, 1978.

\bibitem[KS99]{KS99}
Robert~E. Kottwitz and Diana Shelstad.
\newblock {\em Foundations of twisted endoscopy}.
\newblock Number 255 in Ast\'erisque. Soci\'et\'e math\'ematique de France,
  1999.

\bibitem[PR06]{PRASAD2006646}
Gopal Prasad and Andrei~S. Rapinchuk.
\newblock On the existence of isotropic forms of semi-simple algebraic groups
  over number fields with prescribed local behavior.
\newblock {\em Advances in Mathematics}, 207(2):646--660, 2006.

\end{thebibliography}

\end{document}